\theoremstyle{plain} 
\newtheorem{theorem}{Theorem}
\newtheorem{corollary}[theorem]{Corollary}
\newtheorem{lemma}[theorem]{Lemma}
\newtheorem*{conjecture*}{Conjecture}
\theoremstyle{definition} 
\theoremstyle{definition} 
\newtheorem{remark}[theorem]{Remark}
\newtheorem*{remark*}{Remark}
  \renewcommand\section{\@startsection {section}{1}{\z@}%
                                   {-\bigskipamount}%
                                   {\medskipamount}%
                                   {\large\bfseries
                                   \raggedright}}
  \renewcommand\subsection{\@startsection {subsection}{2}{\z@}%
                                   {-\medskipamount}%
                                   {\smallskipamount}%
                                   {\bfseries
                                   \raggedright}}
\newcommand{\card}{\operatorname{card}}
\renewcommand{\gg}{>\kern-2pt>}
\renewcommand{\ll}{<\kern-2pt<}
\renewcommand{\gg}{>\kern-2pt>}
\renewcommand{\ll}{<\kern-2pt<}
\renewcommand{\le}{\leqslant}
\renewcommand{\ge}{\geqslant}
\newcommand{\1}{\mathbf 1}
\newcommand{\al}{\alpha}
\newcommand{\be}{\beta}
\newcommand{\ga}{\gamma}
\renewcommand{\th}{\omega}
\newcommand{\thh}{\theta}
\newcommand{\si}{\sigma}
\newcommand{\Si}{\Sigma}
\newcommand{\la}{\lambda}
\newcommand{\de}{\delta}
\newcommand{\De}{\Delta}
\newcommand{\D}{\mathcal{D}}
\newcommand{\EE}{\mathcal{E}}
\newcommand{\T}{\operatorname{\mathcal{T}}}
\newcommand{\LD}{\mathcal{L}\!\mathcal{D}}
\renewcommand{\LD}{\mathcal{L}{\kern -1.9pt}\mathcal{D}}
\renewcommand{\LD}{\mathcal{D}}
\renewcommand{\LD}{\mathcal{L}{\kern -4pt}\mathcal{C}}
\renewcommand{\LD}{\mathcal{R}{\kern -3pt}\mathcal{C}}
\renewcommand{\P}{\operatorname{\mathsf{P}}} 
\newcommand{\E}{\operatorname{\mathsf{E}}}
\newcommand{\Cov}{\operatorname{\mathsf{Cov}}}
\newcommand{\R}{{\mathbb{R}}}
\newcommand{\C}{\mathbb{C}}
\renewcommand{\C}{\mathcal{C}}
\newcommand{\tDe}{\tilde\De}
\newcommand{\tT}{\tilde\T}
\renewcommand{\D}{\mathrel{\overset{\operatorname{D}}=}}
\newcommand{\inter}{\mathrm{int}\,}
\begin{document}


\title{Excess Versions of the Minkowski and H\"older Inequalities}


\author{Iosif Pinelis}

\address{Department of Mathematical Sciences\\
Michigan Technological University\\
Hough\-ton, Michigan 49931, USA\\
E-mail: ipinelis@mtu.edu}

\keywords{Minkowski's inequality, H\"older's inequality, $p$-excess}

\subjclass[2010]{26D15, 60E15}

%
%

\begin{abstract}
Certain excess versions of the Minkowski and H\"older inequalities are given. These new results generalize and improve the Minkowski and H\"older inequalities. 
\end{abstract}

\maketitle




\section{Introduction and summary}\label{intro}

Let $p$ and $q$ be positive real numbers such that $\frac1p+\frac1q=1$; then, of course, $p>1$ and $q>1$. Let $X$ and $Y$ denote nonnegative random variables (r.v.'s), defined on the same probability space. Then one has the Minkowski inequality 
\begin{equation*}
	\|X+Y\|_p\le\|X\|_p+\|Y\|_p
\end{equation*}
and the H\"older inequality 
\begin{equation*}
	\E XY\le\|X\|_p\|Y\|_q, 
\end{equation*}
where, as usual, $\|X\|_p:=\E^{1/p}|X|^p$; see, e.g., \cite{royden}. From now on, to avoid unpleasant trivialities, let us assume that $\|X\|_p+\|Y\|_p+\|Y\|_q<\infty$. 


A special case of H\"older's inequality is Lyapunov's inequality, which states that $\E X^\al$ is log-convex in real $\al$, with the conventions $0^0:=1$, $0^\al:=\infty$ for $\al<0$, 
and $0\cdot\infty:=0$, 
so that $\E X^0=1$, and $\E X^\al=\infty$ if $\al<0$ and $\P(X=0)>0$. In particular, we have $\|X\|_1\le\|X\|_p$. 

So, we may define the (always nonnegative) $p$-excess of $X$ by the formula 
\begin{equation*}
	\EE_p(X):=\big(\|X\|_p^p-\|X\|_1^p)^{1/p}. 
\end{equation*}
One may note that $\EE_2(X)$ is the standard deviation of the r.v.\ $X$. 
Introduce also the covariance-like expression 
\begin{equation*}
	\C_p(X,Y):=\E X^{p-1}Y-\E^{p-1}X\,\E Y,  
\end{equation*}
which is the true covariance, $\Cov(X,Y)$, of the r.v.'s $X$ and $Y$ in the case $p=2$. 

As will be shown in this note, the following Minkowski-like and H\"older-like inequalities for the $p$-excess hold: if $p\le2$ (so that $1<p\le2$), then 
\begin{equation}\label{eq:1}
	\EE_p(X+Y)\le\EE_p(X)+\EE_p(Y)  
\end{equation}
and 
\begin{equation}\label{eq:2}
	\C_p(X,Y)\le\EE_p(X)^{p-1}\EE_p(Y). 
\end{equation}
In the case $p=2$ inequality \eqref{eq:2} becomes the covariance inequality, that is, the Cauchy--Schwarz inequality for the centered r.v.'s $X-\E X$ and $Y-\E Y$.

More generally, for $\thh\in[0,1]$ define the $(p,\thh)$-excess of $X$ by the formula 
\begin{equation*}
	\EE_{p,\thh}(X):=\big(\|X\|_p^p-\thh^p\|X\|_1^p)^{1/p},  
\end{equation*}
which interpolates between $\|X\|_p=\EE_{p,0}(X)$ and $\EE_p(X)=\EE_{p,1}(X)$, and then also  
\begin{equation*}
	\C_{p,\thh}(X,Y):=\E X^{p-1}Y-\thh^p\E^{p-1}X\,\E Y,   
\end{equation*}
which interpolates between $\Cov(X^{p-1},Y)=\C_{p,0}(X,Y)$ and $\C_p(X)=\C_{p,1}(X,Y)$. 

Inequalities \eqref{eq:1} and \eqref{eq:2}, along with the Minkowski and H\"older inequalities, can be extended as follows: 

\begin{theorem}\label{th:1}
Suppose that $p\le2$ (so that $1<p\le2$). Then for all $\thh\in[0,1]$ 
\begin{equation}\label{eq:1st}
	\EE_{p,\thh}(X+Y)\le\EE_{p,\thh}(X)+\EE_{p,\thh}(Y)
\end{equation}
and 
\begin{equation}\label{eq:2nd}
	\C_{p,\thh}(X,Y)\le\EE_{p,\thh}(X)^{p-1}\EE_{p,\thh}(Y). 
\end{equation}
For any real $p>2$ and any $\thh\in(0,1]$, inequalities \eqref{eq:1st} and \eqref{eq:2nd} do not hold in general. 
\end{theorem}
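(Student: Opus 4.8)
The plan is to prove the H\"older-type bound \eqref{eq:2nd} first and then deduce the Minkowski-type bound \eqref{eq:1st} from it, exactly as in the classical case. Write $a=\E X$, $b=\E Y$, $q=p/(p-1)$, and introduce Young's defect
\[ G(x,y):=\frac{x^p}{q}+\frac{y^p}{p}-x^{p-1}y\qquad(x,y\ge0), \]
which is nonnegative by Young's inequality and vanishes exactly on the diagonal $x=y$. A direct rearrangement shows that the scaled form of \eqref{eq:2nd},
\[ \C_{p,\thh}(X,Y)\le\frac{\EE_{p,\thh}(X)^p}{q}+\frac{\EE_{p,\thh}(Y)^p}{p}, \]
is equivalent to $\E\,G(X,Y)\ge\thh^p\,G(a,b)$; and this scaled bound in turn yields \eqref{eq:2nd} after replacing $X,Y$ by $X/\EE_{p,\thh}(X)$ and $Y/\EE_{p,\thh}(Y)$ (the degenerate cases $\EE_{p,\thh}(X)=0$ or $\EE_{p,\thh}(Y)=0$ being checked by hand). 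Since $\thh^p\le1$ and $G\ge0$, it suffices to prove the pure Jensen inequality $\E\,G(X,Y)\ge G(\E X,\E Y)$, i.e.\ that $G$ is convex on $[0,\infty)^2$.

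First I would verify this convexity through the Hessian of $G$. One computes $G_{yy}=(p-1)y^{p-2}\ge0$, $G_{xy}=-(p-1)x^{p-2}$, and $G_{xx}=(p-1)x^{p-3}\bigl[(p-1)x-(p-2)y\bigr]$; the bracket is nonnegative precisely because $p\le2$ makes $-(p-2)y\ge0$, so $G_{xx}\ge0$. The key point is the determinant: one finds $G_{xx}G_{yy}-G_{xy}^2=(p-1)^2x^{p-3}y^{p-1}\,h(x/y)$ with $h(t):=(p-1)t-(p-2)-t^{p-1}$, and since $h(1)=0$ while $h'(t)=(p-1)(1-t^{p-2})$ vanishes only at $t=1$ and has the sign of $t-1$ when $p\le2$, the function $h$ attains its minimum $0$ at $t=1$. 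Hence $h\ge0$, the Hessian is positive semidefinite, $G$ is convex, and Jensen gives \eqref{eq:2nd}. I expect this determinant computation, really the one-variable inequality $h\ge0$, to be the technical heart of the positive part, and it is exactly here that the hypothesis $p\le2$ is used (through $G_{xx}\ge0$ and the sign pattern of $h'$).

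To pass from \eqref{eq:2nd} to \eqref{eq:1st} I would use the identity
\[ \EE_{p,\thh}(X+Y)^p=\C_{p,\thh}(X+Y,X)+\C_{p,\thh}(X+Y,Y), \]
which holds because $\E(X+Y)=\E X+\E Y$ for nonnegative $X,Y$. Bounding each summand by \eqref{eq:2nd} gives $\EE_{p,\thh}(X+Y)^p\le\EE_{p,\thh}(X+Y)^{p-1}\bigl(\EE_{p,\thh}(X)+\EE_{p,\thh}(Y)\bigr)$, and dividing by $\EE_{p,\thh}(X+Y)^{p-1}$ (the case $\EE_{p,\thh}(X+Y)=0$ being trivial) yields \eqref{eq:1st}.

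Finally, for $p>2$ the same circle of ideas produces counterexamples, since the relevant convexity now fails: $G_{xx}<0$ as soon as $y>\frac{p-1}{p-2}x$. For \eqref{eq:2nd} I would exhibit this explicitly with the two-point law $(X,Y)=(0,1)$ with probability $\la$ and $(X,Y)=(a,a)$ with probability $1-\la$: then $\E\,G(X,Y)=\la/p$ stays fixed while, as $a\to\infty$, $G(\E X,\E Y)=G\bigl((1-\la)a,(1-\la)a+\la\bigr)\sim\frac{(p-1)\la^2}{2}\bigl((1-\la)a\bigr)^{p-2}\to\infty$ because $p-2>0$; hence $\E\,G(X,Y)<\thh^p\,G(\E X,\E Y)$ for $a$ large, which breaks \eqref{eq:2nd} for every fixed $\thh\in(0,1]$. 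For \eqref{eq:1st} I would use the same scale-separation idea: on a two-point probability space the restriction $r\mapsto\EE_{p,\thh}(r,V)$ acquires, through the $-\thh^p(\text{mean})^p$ term, a concave contribution of order $V^{p-2}$, which for $p>2$ overwhelms the convex part once the background level $V$ is large, so $\EE_{p,\thh}$ fails to be subadditive. I expect the main obstacle here to be covering \emph{all} $\thh\in(0,1]$ rather than merely $\thh$ near $1$: as $\thh\to0$ both inequalities degenerate to the classical H\"older and Minkowski inequalities, which do hold, so any counterexample must run off to infinity in scale, and it is precisely the $p>2$ growth $V^{p-2}\to\infty$ (absent when $p\le2$) that supplies it.
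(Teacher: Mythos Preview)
Your proof of the positive direction is correct and is dramatically simpler than the paper's. The paper proceeds through many rounds of reduction---first to $\thh=1$ via a monotonicity lemma, then to a finite probability space, then via a compactifying change of variables and the Carath\'eodory--John Lagrange multiplier rule to the special case $Y=X+t$, and finally through a delicate moment computation (the inequality $H\ge0$ in its Lemma~4, handled via Lyapunov's inequality and an auxiliary function $h(s)$). Your route replaces all of this by a single observation: the Young defect $G(x,y)=\tfrac1q x^p+\tfrac1p y^p-x^{p-1}y$ is \emph{jointly convex} on $[0,\infty)^2$ precisely when $1<p\le2$, so Jensen gives $\E\,G(X,Y)\ge G(\E X,\E Y)\ge\thh^p\,G(\E X,\E Y)$, which is equivalent to the scaled form of~\eqref{eq:2nd} and yields the sharp form by normalisation. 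The Hessian determinant reduces to the one-variable inequality $h(t)=(p-1)t-(p-2)-t^{p-1}\ge0$, and your analysis of $h'$ is right. Your derivation of~\eqref{eq:1st} from~\eqref{eq:2nd} via the identity $\EE_{p,\thh}(X+Y)^p=\C_{p,\thh}(X+Y,X)+\C_{p,\thh}(X+Y,Y)$ is the classical Minkowski-from-H\"older trick, again more direct than the paper's differentiation argument through $g(t)$. What the paper's optimisation route retains that yours does not is a catalogue of the equality cases (constant $X$, constant $Y$, or $Y=cX$), which drop out of the Lagrange case analysis but are not part of the theorem statement.

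Your counterexample to~\eqref{eq:2nd} for $p>2$ is also correct: since the scaled inequality $\C_{p,\thh}\le\tfrac1q\EE_{p,\thh}(X)^p+\tfrac1p\EE_{p,\thh}(Y)^p$ is \emph{weaker} than~\eqref{eq:2nd} (by Young), any violation of the scaled form is automatically a violation of~\eqref{eq:2nd}, and your two-point example with mass at $(0,1)$ and at $(a,a)$ does violate the scaled form for large $a$. However, your treatment of~\eqref{eq:1st} for $p>2$ is only a heuristic: the notation ``$\EE_{p,\thh}(r,V)$'' is undefined and ``a concave contribution of order $V^{p-2}$'' is not a proof. The clean fix is already implicit in what you wrote earlier: if~\eqref{eq:1st} held for all nonnegative pairs, then $g(t):=\EE_{p,\thh}(X+tY)-\EE_{p,\thh}(X)-t\EE_{p,\thh}(Y)\le0$ for $t\ge0$ with $g(0)=0$, whence $g'(0^+)\le0$, which (assuming $\EE_{p,\thh}(X)>0$) is exactly~\eqref{eq:2nd}. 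So your pair $(X,Y)$ violating~\eqref{eq:2nd} gives $g'(0^+)>0$, and then $(X,tY)$ violates~\eqref{eq:1st} for all small $t>0$. This is essentially the paper's own argument in its final paragraph; you should state it rather than the vague scale-separation sketch.
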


Obviously, the Minkowski and H\"older inequalities are the special cases of 
inequalities \eqref{eq:1st} and \eqref{eq:2nd}, respectively, corresponding to $\thh=0$, and \eqref{eq:1} and \eqref{eq:2} are the special cases of 
\eqref{eq:1st} and \eqref{eq:2nd} corresponding to $\thh=1$. 
Moreover, 
considerations in Round~\ref{th=1} of the proof of \eqref{eq:2nd}, to be given in Section~\ref{proof}, 
show that inequality \eqref{eq:2nd} is, in a sense, an improvement of H\"older's inequality (for $p\in(1,2)$). 
Similarly, the derivation of \eqref{eq:1st} from \eqref{eq:2nd} in the paragraph containing formulas \eqref{eq:g} and \eqref{eq:g'} shows that inequality \eqref{eq:1st} is an improvement of Minkowski's inequality (again for $p\in(1,2)$). 


\section{Proof of Theorem~1
}\label{proof}

We shall see at the end of this section that inequalities \eqref{eq:1st} and \eqref{eq:2nd} are easy to obtain from each other, so that it is enough to prove one of them.

\begin{proof}[Proof of 
inequality \eqref{eq:2nd}]

This proof is much more difficult than that of H\"older's inequality. It will be done by a number of rounds of reduction of the difficulty of the problem. 

\smallskip


\newcounter{theround} \setcounter{theround}{0}
\newcommand{\round}[1]{\noindent%
	\refstepcounter{theround}\textbf{Round \arabic{theround}: #1\!\quad}}%

\round{Reduction to the case $\thh=1$ \label{th=1}}
%
Consider the differences 
\begin{equation}\label{eq:De_p,th}
	\De_{p,\thh}(X,Y):=\C_{p,\thh}(X,Y)-\EE_{p,\thh}(X)^{p-1}\EE_{p,\thh}(Y) 
\end{equation}
and 
\begin{equation}\label{eq:De_p}
	\De_p(X,Y):=\De_{p,1}(X,Y):=\C_p(X,Y)-\EE_p(X)^{p-1}\EE_p(Y)
\end{equation}
between the left and right sides of inequalities \eqref{eq:2nd} and \eqref{eq:2}, respectively. For nonnegative real numbers $A,B,C$, consider also 
\begin{equation}\label{eq:De=..aBC}
	\begin{aligned}
	\De_{p;A,B,C}(X,Y)=&A+\E X^{p-1}Y-\E^{p-1} X\,\E Y \\ 
	&-\big(B+\E X^p-\E^p X\big)^{1/q}\,\big(C+\E Y^p-\E^p Y\big)^{1/p}.    
\end{aligned}	
\end{equation} 

The following lemma will also be used in Round~\ref{mass infty} of this proof.

\begin{lemma}\label{lem:A,B,C} 
Suppose that the nonnegative real numbers $A,B,C$ are such that $A\le B^{1/q}C^{1/p}$. Then 
$\De_{p;A,B,C}(X,Y)\le\De_p(X,Y)$. 
\end{lemma}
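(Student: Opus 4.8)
The plan is to observe that the covariance-type term $\E X^{p-1}Y-\E^{p-1}X\,\E Y=\C_p(X,Y)$ occurs identically in both $\De_{p;A,B,C}(X,Y)$ and $\De_p(X,Y)$, and hence cancels. Recalling from \eqref{eq:De=..aBC} that $B+\E X^p-\E^p X=\EE_p(X)^p+B$ and $C+\E Y^p-\E^p Y=\EE_p(Y)^p+C$, the asserted inequality $\De_{p;A,B,C}(X,Y)\le\De_p(X,Y)$ is therefore equivalent, after cancelling $\C_p(X,Y)$, to
\begin{equation*}
	A+\EE_p(X)^{p-1}\EE_p(Y)\le\big(\EE_p(X)^p+B\big)^{1/q}\big(\EE_p(Y)^p+C\big)^{1/p}.
\end{equation*}
Thus the random variables enter only through the two nonnegative reals $a:=\EE_p(X)^p=\E X^p-\E^p X$ and $b:=\EE_p(Y)^p=\E Y^p-\E^p Y$ (both nonnegative by Lyapunov's inequality, as noted in Section~\ref{intro}), and the claim reduces to a purely elementary inequality in the five nonnegative numbers $a,b,A,B,C$.

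Next I would put the left-hand side in homogeneous form. Since $p-1=p/q$, one has $\EE_p(X)^{p-1}=a^{1/q}$ and $\EE_p(Y)=b^{1/p}$, so the inequality to be proved reads
\begin{equation*}
	A+a^{1/q}b^{1/p}\le(a+B)^{1/q}(b+C)^{1/p}.
\end{equation*}
Invoking the hypothesis $A\le B^{1/q}C^{1/p}$, it then suffices to prove the stronger, hypothesis-free bound
\begin{equation*}
	a^{1/q}b^{1/p}+B^{1/q}C^{1/p}\le(a+B)^{1/q}(b+C)^{1/p}.
\end{equation*}

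Finally, I would recognize this last display as the superadditivity of the function $(s,t)\mapsto s^{1/q}t^{1/p}$ on the nonnegative quadrant, which is nothing but the two-term H\"older inequality: applying $\sum_i x_iy_i\le\big(\sum_i x_i^q\big)^{1/q}\big(\sum_i y_i^p\big)^{1/p}$ to the vectors $x=(a^{1/q},B^{1/q})$ and $y=(b^{1/p},C^{1/p})$ yields exactly the required bound. The only genuine content is therefore the two-term H\"older inequality; I do not expect a real obstacle here, the main point being the bookkeeping that cancels $\C_p(X,Y)$ and reduces the statement, via the hypothesis $A\le B^{1/q}C^{1/p}$, to superadditivity.
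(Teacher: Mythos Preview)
Your proposal is correct and is, in fact, cleaner than the paper's argument. After the same cancellation of $\C_p(X,Y)$, the paper does not simply invoke two-term H\"older; instead it first reduces (by monotonicity in $A$) to the boundary case $A=B^{1/q}C^{1/p}$, then parametrises $C=\ga^p B$, $A=\ga B$ for some $\ga>0$, sets $d(B):=\De_{p;\ga B,B,\ga^p B}(X,Y)$, and shows $d'(B)\le0$ via Young's inequality $ab\le a^p/p+b^q/q$, concluding $d(B)\le d(0)=\De_p(X,Y)$. Your route bypasses the calculus: you recognise at once that the residual inequality $a^{1/q}b^{1/p}+B^{1/q}C^{1/p}\le(a+B)^{1/q}(b+C)^{1/p}$ is exactly the two-term discrete H\"older inequality (superadditivity of $(s,t)\mapsto s^{1/q}t^{1/p}$). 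Since Young and two-term H\"older are equivalent, the underlying analytic content is the same, but your packaging is shorter and avoids the differentiation step and the $\ga$-parametrisation.
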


\begin{proof}
Since $\De_{p;A,B,C}(X,Y)$ is nondecreasing in $A$, without loss of generality (wlog) $A=B^{1/q}C^{1/p}$. If $B=0$ or $C=0$, then $A=0$, and so, the inequality $\De_{p;A,B,C}(X,Y)\break
\le\De_p(X,Y)$ is trivial. Hence, wlog $B>0$ and $C>0$, and then we can write 
$C=\ga^p B$ and $A=\ga B$ for some real $\ga>0$. 
Let now 
\begin{equation}\label{eq:d}
d(B):=\De_{p;\ga B,B,\ga^p B}(X,Y). 	
\end{equation}
Introduce also 
\begin{equation*}
	c:=\big(\ga^p B+\E Y^p-\E^p Y\big)^{1/p}\big/\big(B+\E X^p-\E^p X\big)^{1/p}
\end{equation*}
and then $a:=\ga c^{-1/q}$ and $b:=c^{1/q}$. Then  
\begin{equation*}
	d'(B)=\ga-\frac1q\,c-\frac1p\,\ga^p\,c^{-p/q}
	=ab-\Big(\frac{a^p}p+\frac{b^q}q\Big)\le0
\end{equation*}
for all $B>0$, by Young's inequality. So, $\De_{p;A,B,C}(X,Y)=\De_{p;\ga B,B,\ga^p B}(X,Y)=d(B)\le d(0)=\De_{p:0,0,0}(X,Y)=\De_p(X,Y)$. 
Lemma~\ref{lem:A,B,C} is thus proved. 
\end{proof}

Now take any $\thh\in[0,1]$ and note that $\De_{p,\thh}(X,Y)=\De_{p;A,B,C}(\thh X,\thh Y)$ with $A:=(1-\thh^p)\E X^{p-1}Y$, $B:=(1-\thh^p)\E X^p$, and $C:=(1-\thh^p)\E Y^p$, so that, by H\"older's inequality, the condition $A\le B^{1/q}C^{1/p}$ of Lemma~\ref{lem:A,B,C} holds,  
which yields $\De_{p,\thh}(X,Y)\le\De_p(\thh X,\thh Y)$. Thus, to prove inequality \eqref{eq:2nd}, it is enough to prove its special case, inequality \eqref{eq:2}.

\smallskip 

\round{Removing the case $p=2$ \label{p ne2}}
This round is very easy. 
As was noted, the case $p=2$ of \eqref{eq:2} is the Cauchy--Schwarz inequality. So, it is enough to prove \eqref{eq:2} for $p\in(1,2)$, which will be henceforth assumed. 

\smallskip 

\round{``Finitization'' of the probability space \label{finit}}
Wlog  
the r.v.'s $X$ and $Y$ take only finitely many values 
(one may approximate $X$ and $Y$ from below by nonnegative simple r.v.'s and then use the monotone convergence theorem). Therefore, wlog $X$ and $Y$ are defined on a finite probability space. For instance, we may assume that the probability space is $(I,\Si,\mu)$, where $I$ is the finite set $\{(x,y)\colon\P(X=x,Y=y)>0\}$, $\Si$ is the $\si$-algebra of all subsets of $I$, the probability measure $\mu$ is defined by the condition $\mu(\{i\})=w_i:=\P(X=x,Y=y)$ for all $i=(x,y)\in I$, and the r.v.'s $X$ and $Y$ are defined by the conditions $X(i)=x$ and $Y(i)=y$ for all $i=(x,y)\in I$. 
So, the r.v.'s $X$ and $Y$ maybe identified with finite-dimension vectors $(x_i)_{i\in I}$ and $(y_i)_{i\in I}$, respectively. 

\smallskip 

\round{Reduction to an extremal problem \label{extr}}
Introducing also the vector $W:=(w_i)_{i\in I}$, we can rewrite inequality 
\eqref{eq:2} as 
\begin{equation}\label{eq:?}
\sup\big\{\De_p(X,Y,W)\colon(X,Y,W)\in\T_{I;m_{1,1},m_{1,p},m_{2,1},m_{2,p}}\big\} \overset{\text(?)}\le0,
\end{equation}
where $m_{1,1},m_{1,p},m_{2,1},m_{2,p}$ are any (strictly) positive real numbers, 
\begin{multline*}
	\De_p(X,Y,W):=(X^{p-1}Y)\cdot W-(X\cdot W)^{p-1}\,Y\cdot W \\ 
	-\big(X^p\cdot W-(X\cdot W)^p\big)^{1/q}\,\big(Y^p\cdot W-(Y\cdot W)^p\big)^{1/p}, 
\end{multline*}
the symbol $\cdot$ denotes the dot product in $\R^I$, and $\T_{I;m_{1,1},m_{1,p},m_{2,1},m_{2,p}}$ is the set of all triples $(X,Y,W)$ of vectors $X=(x_i)_{i\in I}$, $Y=(y_i)_{i\in I}$, and $W=(w_i)_{i\in I}$ with nonnegative coordinates such that 
\begin{gather*}
\1\cdot W=\sum_{i\in I}w_i=1,\quad X^p\cdot W=\sum_{i\in I}x_i^p w_i=m_{1,p}, \quad  
Y^p\cdot W=\sum_{i\in I}y_i^p w_i=m_{2,p},\\ 
X\cdot W=\sum_{i\in I}x_i w_i=m_{1,1},\quad 
Y\cdot W=\sum_{i\in I}y_i w_i=m_{2,1};  
\end{gather*}
here and in what follows, $\1:=(1)_{i\in I}$, the vector with all coordinates equal $1$. 
One may note that, in view of the standard convention $\sup\emptyset=-\infty$, inequality \eqref{eq:?} is trivial whenever $m_{1,1},m_{1,p},m_{2,1},m_{2,p}$ are such that  $\T_{I;m_{1,1},m_{1,p},m_{2,1},m_{2,p}}=\emptyset$. 
A reason for the numbers $m_{1,1},m_{1,p},m_{2,1},m_{2,p}$ to be assumed strictly positive is that, if at least one of them is $0$, then for any $(X,Y,W)\in\T_{I;m_{1,1},m_{1,p},m_{2,1},m_{2,p}}$ at least one of the r.v.'s $X,Y$ is almost surely $0$, which makes inequality \eqref{eq:?} trivial. 

\smallskip 

\round{Compactification, by a change of variables \label{compact}}
To solve an extremal problem such as the one stated in Round~\ref{extr}, it is natural to use 
%
the method of Lagrange multipliers. To be able to do that, we need to ensure a priori that the supremum in \eqref{eq:?} is attained. However, this does not seem easy to do, since the set $\T_{I;m_{1,1},m_{1,p},m_{2,1},m_{2,p}}$ is not bounded and hence not compact in general; 
indeed, for any real $\be>0$ and any $i\in I$ such that $w_i=0$, one may take however large $x_i\ge0$ so that the condition $x_i^\be w_i=0$ hold.
%

An appropriate way to compactify the set $\T_{I;m_{1,1},m_{1,p},m_{2,1},m_{2,p}}$ is to use the following new variables: for $i\in I$, let 
\begin{equation}\label{eq:u,v def}
	u_i:=x_i^p w_i\quad\text{and}\quad v_i:=y_i^p w_i,	
\end{equation}
so that 
\begin{equation*}
	x_i w_i=u_i^{1/p}w_i^{1/q},\quad y_i w_i=v_i^{1/p}w_i^{1/q},\quad x_i^{p-1}y_i w_i=u_i^{1/q}v_i^{1/p}.   
\end{equation*}
Then \eqref{eq:?} will follow from 
\begin{equation}\label{eq:??}
\sup\big\{\tDe_p(U,V,W)\colon(U,V,W)\in\tT_{I;m_{1,1},m_{1,p},m_{2,1},m_{2,p}}\big\} \overset{\text(?)}\le0,
\end{equation}
where $\tT_{I;m_{1,1},m_{1,p},m_{2,1},m_{2,p}}$ is the set of all triples $(U,V,W)$ of vectors $U=(u_i)_{i\in I}$, $V=(v_i)_{i\in I}$, and $W=(w_i)_{i\in I}$ with nonnegative coordinates such that 
\begin{gather}
\1\cdot W=1,\quad U\cdot\1=m_{1,p},\quad V\cdot\1=m_{2,p}, \label{eq:1.W,U.1,V.1} \\ 
U^{1/p}\cdot W^{1/q}=m_{1,1},\quad V^{1/p}\cdot W^{1/q}=m_{2,1} \label{eq:U.W,V.W}
\end{gather}
and, for $(U,V,W)\in\tT_{I;m_{1,1},m_{1,p},m_{2,1},m_{2,p}}$, 
\begin{align}
	\tDe_p(U,V,W):=&U^{1/q}\cdot V^{1/p}-(U^{1/p}\cdot W^{1/q})^{p-1}\,V^{1/p}\cdot W^{1/q} \notag\\ 
	&-\big(U\cdot\1-(U^{1/p}\cdot W^{1/q})^p\big)^{1/q}\,
	\big(V\cdot\1-(V^{1/p}\cdot W^{1/q})^p\big)^{1/p} \label{eq:tDe1} \\  
	 =&U^{1/q}\cdot V^{1/p}-m_{1,1}^{p-1}\,m_{2,1} 
	-\big(m_{1,p}-m_{1,1}^p\big)^{1/q}\,
	\big(m_{2,p}-m_{2,1}^p\big)^{1/p}.   \label{eq:tDe2} 
\end{align} 
Indeed, the supremum in \eqref{eq:?} is no greater than that in \eqref{eq:??}; 
at this point, we can only say ``no greater'' because the (following by \eqref{eq:u,v def}) expressions $x_i=(u_i/w_i)^{1/p}$ and $y_i=(v_i/w_i)^{1/p}$ of $x_i$ and $y_i$ in terms of $u_i,v_i,w_i$ will only be valid if $w_i\ne0$. 

The important point here is that the set $\tT:=\tT_{I;m_{1,1},m_{1,p},m_{2,1},m_{2,p}}$ is compact, and the function $\tDe_p$ is continuous on it. So, $\tDe_p$ attains the (global) maximum on the set $\tT$ whenever $\tT\ne\emptyset$, which will be henceforth assumed wlog. 

For any vector $R=(r_i)_{i\in I}\in[0,\infty)^I$, let 
\begin{equation*}
	I_R:=\{i\in I\colon r_i>0\}.  
\end{equation*}
In view of \eqref{eq:U.W,V.W} and the condition (stated below \eqref{eq:?}) that $m_{1,1},m_{1,p},m_{2,1},m_{2,p}$ are strictly positive, for any $(U,V,W)\in\tT_{I;m_{1,1},m_{1,p},m_{2,1},m_{2,p}}$ we have  
\begin{equation}\label{eq:I cap I}
	I_U\cap I_W\ne\emptyset\quad\text{and}\quad I_V\cap I_W\ne\emptyset.   
\end{equation}

\smallskip 

\round{Further preparation for Lagrange multipliers \label{prep}}
%
Fix now any triple $(U^*,V^*,W^*)\in\tT_{I;m_{1,1},m_{1,p},m_{2,1},m_{2,p}}$ at which the maximum of $\tDe_p$ is attained. 
Then clearly the triple $(U^*,V^*,W^*)$ is a maximizer of $\tDe_p$ over the set 
\begin{multline*}
	\tT^*_{I;m_{1,1},m_{1,p},m_{2,1},m_{2,p}} \\ 
	:=\{(U,V,W)\in\tT_{I;m_{1,1},m_{1,p},m_{2,1},m_{2,p}}\colon 
	I_U=I_{U^*}, I_V=I_{V^*}, I_W=I_{W^*}\}. 
\end{multline*}
Also, with the triple $(U^*,V^*,W^*)$ fixed, any triple 
$(U,V,W)\in\tT^*_{I;m_{1,1},m_{1,p},m_{2,1},m_{2,p}}$ may be identified with the triple $(U|_{I_{U^*}},V|_{I_{V^*}},W|_{I_{W^*}})$ of the restrictions of $U,V,W$ to the sets $I_U=I_{U^*}, I_V=I_{V^*}, I_W=I_{W^*}$, respectively; here, for instance, $U|_{I_{U^*}}=(u_i)_{i\in I_{U^*}}$; 
so, $\tDe_p(U,V,W)$ may be considered a function of $(U|_{I_{U^*}},V|_{I_{V^*}},W|_{I_{W^*}})$.

\smallskip   

\round{Obtaining Lagrange multiplier equations \label{lagr eqs}}
Now we are ready to apply (say) the 
Carath\'eodory--John version of the Lagrange multiplier rule (see e.g.\ \cite[page~441]{pourciau}). In view of \eqref{eq:tDe2}, there exist some real numbers $\al,\la,\mu,\nu,\rho,\th$ (Lagrange multipliers) -- with $\al$ corresponding to the minimized $\tDe_p(U,V,W)$, and $\la,\mu,\nu,\rho,\th$ corresponding to the restrictions in \eqref{eq:U.W,V.W} and \eqref{eq:1.W,U.1,V.1} on $U^{1/p}\cdot W^{1/q}, V^{1/p}\cdot W^{1/q}$, \break
$U\cdot\1,V\cdot\1,\1\cdot W$, respectively -- 
such that 
\begin{equation}\label{eq:not 0}
	\al^2+\la^2+\mu^2+\nu^2+\rho^2+\th^2>0 
\end{equation}
and the triple $(U^*,V^*,W^*)$ is a solution 
to the following system of equations for $(U,V,W)$: 
\begin{alignat}{7}
& \forall\ i\in I_U\quad & \al(p-1)u_i^{-1/p}v_i^{1/p}&
&&=\la u_i^{-1/q}w_i^{1/q}&&	 &&+\nu, && && \label{(1)}\\	
& \forall\ i\in I_V\quad & \al u_i^{1/q}v_i^{-1/q}&
&&= &&\mu v_i^{-1/q}w_i^{1/q}	 && &&+\rho, && \label{(2)}\\	
& \forall\ i\in I_W\quad & 0&
&&=\la u_i^{1/p}w_i^{-1/p} &+&\mu v_i^{1/p}w_i^{-1/p}	 && && &&+
\th. \label{(3)}
\end{alignat}
Multiplying (both sides of) equations \eqref{(1)} and \eqref{(2)} by $u_i$ and $v_i$, respectively, we have 
\begin{alignat}{6}
\al(p-1)u_i^{1/q}v_i^{1/p}
&=&\la u_i^{1/p}w_i^{1/q}&&	 &&+\nu u_i, && && \label{(4)}\\	
\al u_i^{1/q}v_i^{1/p}
&=& &&\mu v_i^{1/p}w_i^{1/q}	 && &&+\rho v_i && \label{(5)}
\end{alignat}
for all $i\in I$. 

A difficulty in analyzing these Lagrange multiplier equations is that some of the Lagrange multipliers $\al,\la,\mu,\nu,\rho,\th$ may take zero values. In a certain sense, this corresponds to the fact the difference between the left and right sides of inequality \eqref{eq:2} can attain its maximum (zero) value in a number of ways, including the cases when $X=Y$ and when 
$Y$ is a constant. 
Also, we have to account for cases when some of the values of $u_i,v_i,w_i$ are $0$, that is, when $i$ is not in the corresponding sets $I_U,I_V,I_W$. 

In particular, we have to consider the cases when $u_i>0$ or $v_i>0$ while $w_i=0$ \big(that is, when $i\in(I_U\cup I_V)\setminus I_W$\big). Recalling \eqref{eq:u,v def}, we see that, in terms of the ``original, pre-compactification'' variables $x_i,y_i,w_i$, these cases reflect the possibility for these variables to vary in such a way that for some $i\in I$ we have $w_i\downarrow0$ while $x_i\to\infty$ or $y_i\to\infty$ and, moreover, $x_i^p w_i$ or, respectively, $y_i^p w_i$ converges to a finite nonzero limit. This kind of phenomena may be thought of as part of the mass of the ``distribution`` of $U$ or $V$ running away to $\infty$. This brings us to the following round.

\smallskip  

\round{Analysis of Lagrange multipliers, part I: Removing ``the masses at $\infty$'' \label{mass infty}}
\noindent Take any triple $(U,V,W)\in([0,\infty)^I)^3$ satisfying the Lagrange multiplier 
equations \eqref{(1)}--\eqref{(3)}. On the set $I_W$, define the probability space by the condition $\P(\{i\})=w_i$ for all $i\in I_W$, and then define r.v.'s $X$ and $Y$ on this probability space by the conditions 
\begin{equation}\label{eq:X,Y}
\text{ $X(i)=x_i:=(u_i/w_i)^{1/p}$ and $Y(i)=y_i:=(v_i/w_i)^{1/p}$ for all $i\in I_W$. }	
\end{equation}
The r.v.'s $X$ and $Y$ are well defined, because $w_i>0$ for all $i\in I_W$ and $\sum_{i\in I_W}w_i=\sum_{i\in I}w_i=1$. Then, by \eqref{eq:tDe1} and \eqref{eq:De=..aBC},  
$\tDe_p(U,V,W)=\De_{p;A,B,C}(X,Y)$, 
where 
\begin{equation}\label{eq:A,B,C}
	A:=\sum_{i\notin I_W}u_i^{1/q}v_i^{1/p},\quad B:=\sum_{i\notin I_W}u_i,\quad C:=\sum_{i\notin I_W}v_i,  
\end{equation}
``the masses at $\infty$''. 
By H\"older's inequality, here the condition $A\le B^{1/q}C^{1/p}$ in Lemma~\ref{lem:A,B,C} holds. 
So, $\tDe_p(U,V,W)\le\De_p(X,Y)$. 

Thus, it remains to show that $\De_p(X,Y)\le0$ for $X$ and $Y$ as in \eqref{eq:X,Y}, with $(U,V,W)\in([0,\infty)^I)^3$ satisfying the Lagrange multiplier 
equations \eqref{(1)}--\eqref{(3)}.

\smallskip 

\round{Analysis of Lagrange multipliers, part II: Reduction to the case $Y=X+t$,\ \; $t\in\R$ \label{Y=X+t}}
In terms of the $x_i$'s and $y_i$'s as in \eqref{eq:X,Y}, for $i\in I_W$ equations \eqref{(3)}, \eqref{(4)}, \eqref{(5)} can be rewritten as   
\begin{alignat}{6}
0
&=& \la x_i &&+\mu y_i	 && && && +
\th, \label{(9)} \\ 
\al(p-1)x_i^{p-1}y_i
&=&\la x_i&&	 &&+\nu x_i^p, && && \label{(7)}\\	
\al x_i^{p-1}y_i
&=& && \mu y_i	 && && +\rho y_i^p. && \label{(8)}	
\end{alignat}

\begin{lemma}\label{lem:mu=0} 
Take any pair $(X,Y)\in([0,\infty)^{I_W})^2$ satisfying 
equations \eqref{(9)}--\eqref{(8)} with $\mu=0$. Then $\De_p(X,Y)\le0$.  
\end{lemma}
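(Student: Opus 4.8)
The plan is to extract from the Lagrange equations \eqref{(9)}--\eqref{(8)} the rigid structure that the assumption $\mu=0$ imposes on the pair $(X,Y)$, and then to verify $\De_p(X,Y)\le0$ in each of the few resulting configurations; in every configuration the inequality turns out to be either an equality forced by homogeneity or a triviality obtained by discarding a nonnegative term.

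First I would use \eqref{(9)}: with $\mu=0$ it reads $\la x_i+\th=0$ for all $i\in I_W$. If $\la\ne0$, this forces all $x_i$ to be equal, i.e.\ $X$ is constant; then $\EE_p(X)=0$ and $\C_p(X,Y)=\E X^{p-1}Y-\E^{p-1}X\,\E Y=0$, so $\De_p(X,Y)=0$ and there is nothing more to prove. Hence I may assume $\la=0$, whereupon \eqref{(9)} also gives $\th=0$ (as $I_W\ne\emptyset$). It remains to treat the case $\la=\mu=\th=0$.

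In this remaining case I would first note that $\al\ne0$: otherwise \eqref{(7)} and \eqref{(8)} degenerate to $\nu x_i^p=0$ and $\rho y_i^p=0$, and since \eqref{eq:I cap I} supplies indices $i\in I_W$ with $x_i>0$ and with $y_i>0$, this would force $\nu=\rho=0$ and hence all six multipliers to vanish, contradicting \eqref{eq:not 0}. With $\al\ne0$, equations \eqref{(7)} and \eqref{(8)} factor as $x_i^{p-1}\big(\al(p-1)y_i-\nu x_i\big)=0$ and $y_i\big(\al x_i^{p-1}-\rho y_i^{p-1}\big)=0$. Now I split according to the supports of $X$ and $Y$. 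If some index has $x_i>0$ but $y_i=0$, then \eqref{(7)} gives $\nu=0$, and \eqref{(7)} then forces $x_iy_i=0$ for every $i$; symmetrically, an index with $y_i>0$ but $x_i=0$ gives $\rho=0$ and again $x_iy_i=0$ for every $i$. In either of these disjoint-support cases $\E X^{p-1}Y=0$, so that $\De_p(X,Y)=-\E^{p-1}X\,\E Y-\EE_p(X)^{p-1}\EE_p(Y)\le0$, all three factors being nonnegative. Otherwise $X$ and $Y$ have the same (nonempty) support, and on it \eqref{(7)} yields $y_i=c\,x_i$ with the single positive constant $c:=\nu/\big(\al(p-1)\big)$; since both variables vanish off the support, $Y=cX$ identically. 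Then positive homogeneity of $\EE_p$ together with $\tfrac1p+\tfrac1q=1$ gives $\C_p(X,cX)=c\,\EE_p(X)^p=\EE_p(X)^{p-1}\,\EE_p(cX)$, i.e.\ $\De_p(X,Y)=0$.

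The main obstacle is not any single estimate but the bookkeeping in the last step: one must check that forcing individual multipliers to vanish is legitimate (which is exactly where \eqref{eq:I cap I} and the nondegeneracy \eqref{eq:not 0} enter) and that the case analysis on the supports is exhaustive, so that the only surviving extremal configurations are $X$ constant and $Y$ proportional to $X$ — precisely the equality cases one expects from H\"older's inequality \eqref{eq:2}.
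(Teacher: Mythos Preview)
Your proof is correct and follows essentially the same case-analysis of the Lagrange equations as the paper, arriving at the same extremal configurations ($X$ constant, $Y=cX$, or $\E X^{p-1}Y=0$). The only organizational difference is that you start from \eqref{(9)} to eliminate $\la\ne0$ first and then invoke \eqref{eq:I cap I} together with \eqref{eq:not 0} to secure $\al\ne0$ before splitting on supports, whereas the paper branches on $\rho$ first and handles the degenerate $X\equiv0$ case explicitly rather than excluding it via \eqref{eq:I cap I}.
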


\begin{proof} This proof consists in the consideration of a system of simple cases, keeping in mind the condition $\mu=0$. 
\begin{enumerate}[]
	\item \emph{Case 1: $\rho=0$.} 
	\begin{enumerate}[]
	\item \emph{Subcase 1.1: $\rho=0\ne\al$.} Then, by \eqref{(8)}, $x_i^{p-1}y_i=0$ for all $i\in I_W$. So, $\E X^{p-1}Y=0$, and inequality 
	$\De_p(X,Y)\le0$ obviously holds.  
	\item \emph{Subcase 1.2: $\rho=0=\al$.} Then, by \eqref{(7)}, $\la x_i+\nu x_i^p=0$ for all $i\in I_W$. 
	\begin{enumerate}[]
	\item \emph{Subsubcase 1.2.1: $\rho=0=\al$ and $\la=0=\nu$.} Then, by \eqref{(9)}, $\th=0$. So, we have a contradiction with \eqref{eq:not 0}. 
	\item \emph{Subsubcase 1.2.2: $\rho=0=\al$ and $\la\ne0$.} Then, by \eqref{(9)}, $x_i$ does not depend on $i\in I_W$; that is, the r.v.\ $X$ is a constant, and hence $\De_p(X,Y)=0$. 
	\item \emph{Subsubcase 1.2.2: $\rho=0=\al$ and $\la=0\ne\nu$.} Then, by \eqref{(7)}, $x_i=0$ for all on $i\in I_W$; that is, $X=0$, and hence $\De_p(X,Y)=0$. 
\end{enumerate}
\end{enumerate}
	\item \emph{Case 2: $\rho\ne0$.} 
	\begin{enumerate}[]
	\item \emph{Subcase 2.1: $\rho\ne0=\la$.} Then, by \eqref{(7)} and \eqref{(8)}, $\nu x_i^p=(p-1)\rho y_i^p$ for all $i\in I_W$. So, $Y=cX$ for some real $c\ge0$, and hence $\De_p(X,Y)=0$.  
	\item \emph{Subcase 2.2: $\rho\ne0\ne\la$.} Then, by \eqref{(9)}, $x_i$ does not depend on $i\in I_W$; that is, the r.v.\ $X$ is a constant, and hence $\De_p(X,Y)=0$.  
\end{enumerate}
\end{enumerate}
Thus, indeed in all cases we have $\De_p(X,Y)\le0$. 
\end{proof} 

So, by Lemma~\ref{lem:mu=0}, wlog $\mu\ne0$. So, in view of \eqref{(9)}, $Y=kX+t$ for some real $k$ and $t$. 

Now we need Chebyshev's integral inequality, which states that, if $f$ and $g$ are nondecreasing functions from $\R$ to $\R$, then for any r.v.\ $Z$ one has $\E f(Z)g(Z)\ge\E f(Z)\E g(Z)$ whenever all the three 
expectations here are finite; see e.g.\ Corollary~2 on page~318 in \cite{kemper77} (with $n=1$, $\phi=1$, and the probability distribution of $Z$ to play the role of the measure $\la$ there). This inequality follows immediately by taking the expectation of both sides of the obvious inequality $(f(Z)-f(Z_1))(g(Z)-g(Z_1))\ge0$, where $Z_1$ is an independent copy of $Z$. 

By Chebyshev's integral inequality and the mentioned log-convexity of $\E X^\al$ in $\al$, 
for $Y=kX+t$ with $k\le0$ we have 
$\E X^{p-1}Y\le\E X^{p-1}\,\E Y\le\E^{p-1} X\,\E Y$, which yields $\De_p(X,Y)\le0$, in view of \eqref{
eq:De_p}. So, wlog $k>0$, and then, because of the positive homogeneity of $\De_p(X,Y)$ in $Y$, wlog $Y=X+t$. 

\smallskip 

\round{Analysis of the case $Y=X+t$,\ \; $t\in\R$ \label{Y=X+t,analysis}}
Thus, to finish the proof of \eqref{eq:2}, 
it remains to prove 

\begin{lemma}\label{lem:X+t} 
For all real $t$ such that the r.v.\ $X+t$ is nonnegative, we have 
\begin{equation}\label{eq:de}
	\de(t):=\De_p(X,X+t)\le0. 
\end{equation} 
\end{lemma}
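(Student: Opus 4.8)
The plan is to exploit the special structure $Y=X+t$ and recast $\de(t)$ as the gap between an affine function and a convex function that are tangent at $t=0$. Write $M:=\EE_p(X)^p=\E X^p-\E^p X$, $D:=\E X^{p-1}-\E^{p-1}X$, and $N(t):=\EE_p(X+t)^p=\E(X+t)^p-\E^p(X+t)$. A direct expansion gives the affine expression $\C_p(X,X+t)=M+tD=:g(t)$, while $\EE_p(X)^{p-1}\EE_p(X+t)=M^{1/q}N(t)^{1/p}=:R(t)$, so that $\de(t)=g(t)-R(t)$. If $X$ is a.s.\ constant then $\de\equiv0$ and there is nothing to prove, so assume $M>0$; then $N(t)>0$ for every admissible $t$ (those with $X+t\ge0$), since $X+t$ is then nonconstant. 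First I would verify the tangency: $R(0)=M^{1/q}M^{1/p}=M=g(0)$, and, using $N'(0)=pD$, that $R'(0)=D=g'(0)$. Consequently, \emph{once $R$ is shown to be convex} on the admissible interval, the affine $g$ is its tangent line at $0$ and therefore lies below it, giving $\de(t)=g(t)-R(t)\le0$ for all admissible $t$, which is \eqref{eq:de}. Note this bypasses any case split on the sign of $\C_p(X,X+t)$.

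Next I would reduce the convexity of $R$ (equivalently of $t\mapsto N(t)^{1/p}=\EE_p(X+t)$; the positive constant factor $M^{1/q}$ is irrelevant) to a moment inequality. On the open set of $t$ for which $Z:=X+t>0$ a.s., $N$ is smooth, and a routine differentiation shows $\big(N^{1/p}\big)''\ge0$ is equivalent to $NN''\ge\tfrac1q(N')^2$. Writing $\phi(\al):=\E Z^\al-\E^\al Z$, one has $N=\phi(p)$, $N'=p\,\phi(p-1)$ and $N''=p(p-1)\,\phi(p-2)$, so (using $\tfrac1q=\tfrac{p-1}p$) the inequality $NN''\ge\tfrac1q(N')^2$ reduces exactly to the three-term inequality
\[
\phi(p)\,\phi(p-2)\ge\phi(p-1)^2 .
\]
The signs must be tracked carefully: for $p\in(1,2)$ one has $\phi(p)\ge0$ and $\phi(p-2)\ge0$ (convexity of $z\mapsto z^\al$ for $\al\ge1$ and for $\al<0$), whereas $\phi(p-1)\le0$ (concavity for $0<\al<1$).

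To prove the three-term inequality I would invoke the standard Jensen-gap (stop-loss) representation
\[
\phi(\al)=\E Z^\al-\E^\al Z=\al(\al-1)\int_0^\infty s^{\al-2}\,\kappa(s)\,\d s,\qquad \kappa(s):=\E(Z-s)_+-(\E Z-s)_+\ge0,
\]
valid for $Z\ge0$, where $\kappa$ is nonnegative by Jensen and supported in $(0,\infty)$. Setting $I_\be:=\int_0^\infty s^\be\kappa(s)\,\d s\ge0$, the Cauchy--Schwarz inequality for the measure $\kappa(s)\,\d s$ (splitting $s^{p-3}=s^{(p-2)/2}s^{(p-4)/2}$) gives $I_{p-3}^2\le I_{p-2}I_{p-4}$. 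Since $\phi(p)=p(p-1)I_{p-2}$, $\phi(p-1)=(p-1)(p-2)I_{p-3}$ and $\phi(p-2)=(p-2)(p-3)I_{p-4}$, it suffices to check the elementary numerical inequality $p(p-1)(p-2)(p-3)\ge(p-1)^2(p-2)^2$; indeed their difference equals $-2(p-1)(p-2)$, which is positive for $p\in(1,2)$. Combining these,
\[
\phi(p)\phi(p-2)=p(p-1)(p-2)(p-3)\,I_{p-2}I_{p-4}\ge(p-1)^2(p-2)^2\,I_{p-2}I_{p-4}\ge(p-1)^2(p-2)^2\,I_{p-3}^2=\phi(p-1)^2 .
\]

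The hard part is precisely this convexity step, namely $NN''\ge\tfrac1q(N')^2$, since it is where the hypothesis $p\le2$ is essential: for $p>2$ the numerical inequality reverses and the sign pattern of $\phi(p),\phi(p-1),\phi(p-2)$ degenerates, in agreement with the stated failure of \eqref{eq:2nd} for $p>2$. A secondary technical point is an integrability/boundary issue: when $\P(X+t=0)>0$ (which occurs only at finitely many $t$ on the finite probability space) one has $\E Z^{p-2}=\infty$, so $N''=+\infty$ and the smooth computation is inapplicable at those $t$. However, $N'=p(\E Z^{p-1}-\E^{p-1}Z)$ stays finite (as $p-1>0$), so $N^{1/p}=\EE_p(X+t)$ is continuous and $C^1$ wherever $N>0$; hence convexity on the admissible set minus those finitely many points extends to the whole admissible interval by continuity. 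This completes the proof of \eqref{eq:de}, and thereby of \eqref{eq:2}.
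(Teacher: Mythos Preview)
Your proof is correct, and its overall architecture matches the paper's: both reduce $\de(t)\le0$ to the convexity of $t\mapsto \EE_p(X+t)=N(t)^{1/p}$ (equivalently, $\de(0)=\de'(0)=0$ together with concavity of $\de$), and both recast that convexity as the moment inequality
\[
H=\phi(p)\,\phi(p-2)-\phi(p-1)^2\ge0,\qquad \phi(\al)=\E Z^\al-\E^\al Z,\quad Z=X+t.
\]

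Where you genuinely diverge from the paper is in the proof of $H\ge0$. The paper normalizes $m_1=1$, squeezes $m_{p-1}$ between bounds coming from Lyapunov's inequality, reduces (via a symmetry between $m_p$ and $m_{p-2}$) to a one-variable inequality $H_*\ge0$, and then finishes with a somewhat delicate chain of substitutions and three successive differentiations of an exponential combination. Your route is shorter and more conceptual: the Jensen-gap/stop-loss identity $\phi(\al)=\al(\al-1)\int s^{\al-2}\kappa(s)\,\d s$ (valid here because on the finite probability space $Z$ is bounded and, on the open set you work on, bounded away from $0$, so $\kappa$ has compact support in $(0,\infty)$) turns $H\ge0$ into the product of two transparent facts---Cauchy--Schwarz $I_{p-3}^2\le I_{p-2}I_{p-4}$ for the nonnegative measure $\kappa(s)\,\d s$, and the elementary numerical inequality $p(p-1)(p-2)(p-3)\ge(p-1)^2(p-2)^2$ for $p\in(1,2)$. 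This makes the role of the hypothesis $p\in(1,2)$ completely explicit (it is exactly what gives the numerical inequality its sign) and avoids the paper's ad hoc analytic computations; conversely, the paper's argument has the minor advantage of not invoking the stop-loss representation and being entirely self-contained at the level of moments.

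One small quibble of presentation: your phrase ``valid for $Z\ge0$'' for the stop-loss identity is an overstatement in isolation (for $\al\le1$ the integral for a single $z^\al$ diverges near $0$), but the identity for the \emph{difference} $\phi(\al)$ is indeed valid in your setting because $\kappa$ is compactly supported in $(0,\infty)$; you implicitly use this and it would be worth saying so explicitly.
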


\begin{proof} 
In view of \eqref{
eq:De_p}, $\de(0)=0=\de'(0)$. So, it is enough to show that the function $\de$ is concave or, equivalently, that  the function $f$ given by the formula 
\begin{equation*}
	f(t):=\big(\E(X+t)^p-(\E X+t)^p\big)^{1/p}
\end{equation*}
for $t\in T:=\{s\in\R\colon X+s\ge0\}$ is convex. The set $T$ is an interval. So, it suffices to show that $f''(t)\ge0$ for all $t$ in the interior $\inter T$ of the set $T$ or, equivalently, that 
\begin{equation}\label{eq:H}
H:=
(m_p-m_1^p)(m_{p-2}-m_1^{p-2})-(m_1^{p-1}-m_{p-1})^2\ge0, 	
\end{equation}
where 
\begin{equation*}
	m_r:=\E Y^r,
\end{equation*}
$Y=X+t$, and $t\in\inter T$. 
Here, by the positive homogeneity, for any fixed $t\in\inter T$ wlog 
\begin{equation*}
	m_1=\E Y=1. 
\end{equation*}

In principle, inequality \eqref{eq:H} can be proved by minimizing the $p$th moment $m_p$ of the r.v.\ $Y$ given the moments $m_{p-2},m_0=1,m_{p-1},m_1=1$ of $Y$ of orders $p-2,0,\break 
p-1,1$. Using results of, say, \cite{winkler88,ext-published}, we may assume that the support of the distribution of $Y$ consists of at most $\card\{p-2,0,p-1,1\}=4$ points, where $\card$ denotes the cardinality. This would reduce \eqref{eq:H} to a minimization problem involving $8$ variables (not counting $p$): $4$ variables for the points of the support of the distribution and $4$ variables for the corresponding masses. In our particular case, the minimization problem  can be further simplified by noticing that the moment functions mapping $x\in[0,\infty)$ to $x^{p-2},x^0,x^{p-1},x^1,x^p$ form a Tchebycheff--Markov system and hence we may assume that the support of the distribution of $Y$ consists of at most $2$ points; see e.g.\ \cite{karlin-studden} or \cite[Propositions~1 and 2] {pinelis2011tchebycheff}. Thus, we would have to deal with $4$ variables (not counting $p$): $2$ variables for the points of the support and $2$ variables for the masses. The values of the masses could be eliminated by solving the system of equations $m_0=1$ and $m_1=1$, which are linear with respect to the two masses. That would leave us with two variables, one for each of the two support points, plus another variable for $p$. 

Fortunately, again in our particular case, we can actually use a simple trick to reduce the problem to one involving just one variable in addition to $p$. Indeed, by the mentioned Lyapunov inequality (that is, the log-convexity of $m_r$ in $r$), 
%
$m_{p-1}\le m_1^{p-1}m_0^{2-p}=1$, $1=m_1\le m_{p-1}^{p-1}m_p^{2-p}$, and $1=m_0\le m_{p-2}^{p-1}m_{p-1}^{2-p}$, whence 
\begin{equation}\label{eq:>m_{p-1}>}
	1\ge m_{p-1}\ge m_*\vee m_{**},
\end{equation}
where 
\begin{equation*}
	m_*:=m_p^{-(2-p)/(p-1)}\quad\text{and}\quad m_{**}:=m_{p-2}^{-(p-1)/(2-p)}. 
\end{equation*}
Next, $m_*\ge m_{**}$ iff $m_p^{(2-p)^2}\le m_{p-2}^{(p-1)^2}$. So, by \eqref{eq:H} and \eqref{eq:>m_{p-1}>}, 
\begin{multline*}
	H\ge H_*:=
	(m_p-1)\big(m_p^{(2-p)^2/(p-1)^2}-1\big)-(1-m_*)^2\\ 
	\text{if }m_p^{(2-p)^2}\le m_{p-2}^{(p-1)^2},
\end{multline*} 
\begin{multline*}	 
	H\ge H_{**}:=
	\big(m_{p-2}^{(p-1)^2/(2-p)^2}-1\big)(m_{p-2}-1)-(1-m_{**})^2\\ 
	\text{if }m_p^{(2-p)^2}\ge m_{p-2}^{(p-1)^2}. 
\end{multline*}
Note that $H_*$ depends only on $p$ and $m_p$, whereas $H_{**}$ depends only on $p$ and $m_{p-2}$. 

It suffices to show that $H_*\ge0$ for all $p\in(1,2)$ and real $m_p\ge1$ and that $H_{**}\ge0$ for all $p\in(1,2)$ and real $m_{p-2}\ge1$. At this point, $m_p$ and $m_{p-2}$ may be considered free variables, with the only restriction that they take real values $\ge1$. Then, under the one-to-one correspondence between these free variables given by the formula $m_p^{(2-p)^2}\leftrightarrow m_{p-2}^{(p-1)^2}$, every value of $H_{**}$ turns into the corresponding value of $H_*$, and vice versa. 
%
So, it is enough to show that $H_*\ge0$ for all $p\in(1,2)$ and real $m_p\ge1$. 

Making now the substitution $m_p=e^{(p-1)^2s}$, we can write 
\begin{multline*}
	\frac{H_*}{e^{2 (p-2) (p-1) s}}=h(s):=
	2 e^{(2-p) (p-1) s}-e^{(3-p) (p-1) s}-e^{(2-p) p s}+e^s-1. 
\end{multline*}
So, it suffices to show that $h(s)\ge0$ for all real $s\ge0$ (and all $p\in(1,2)$). Since $h$ is a linear combination of exponential functions, this can be done essentially algorithmically. Indeed, let 
$h_1(s):=h'(s)e^{(p-2) (p-1) s}$ and $h_2(s):=h_1'(s)e^{-(3 - 3 p + p^2) s}$. 
Then $h_2'(s)(2-p)^{-2} (p-1)^{-2}=p e^{-(p-1)^2 s}+(3-p) e^{-(2-p)^2 s}$, 
which is manifestly $>0$. So, $h_2$ is increasing (on the interval $[0,\infty)$), with $h_2(0)=0$. So, $h_2\ge0$ and hence $h_1$ is nondecreasing, with $h_1(0)=0$. So, $h_1\ge0$ and hence $h$ is nondecreasing, with $h(0)=0$. So, indeed $h\ge0$. 
Thus, Lemma~\ref{lem:X+t} is completely proved. 
\end{proof} 

This completes the proof of inequality \eqref{eq:2} and hence the proof of \eqref{eq:2nd} .
\end{proof}

Take now any $\thh\in[0,1]$. For real $t\ge0$, let 
	\begin{equation}\label{eq:g}
	g(t):=g_{\thh;X,Y}(t):=\EE_{p,\thh}(X+tY)-\EE_{p,\thh}(X)-t\EE_{p,\thh}(Y). 
\end{equation}
If $\EE_{p,\thh}(X+tY)=0$ for some $t\ge0$, then obviously $g(t)\le0$; otherwise \big(that is, if $\EE_{p,\thh}(X+tY)>0$\big), we can write 
\begin{equation}\label{eq:g'}
	g'(t)=\C_{p,\thh}(X+tY,Y)\EE_{p,\thh}(X+tY)^{1-p}-\EE_{p,\thh}(Y)\le0,  
\end{equation}
in view of already proved inequality \eqref{eq:2nd}; here, $g'(0)$ is understood as the right derivative of $g$ at $0$. So, for each real $t\ge0$ such that $g(t)>0$, we have $g'(t)\le0$. Also, $g(0)=0$ and the function $g$ is continuous. Suppose now that $g(1)>0$ and let $a:=\sup\{t\in[0,1]\colon g(t)=0\}$. Then $g(a)=0$ and $0\le a<1$; also, $g>0$ and hence $g'\le0$ on $(a,1]$. In view of  the mean value theorem, this contradicts the conditions $g(a)=0<g(1)$. Therefore, $g(1)\le0$; that is, inequality \eqref{eq:1st} holds. 

To finish the proof of Theorem~\ref{th:1}, it remains to show that inequalities \eqref{eq:1st} and \eqref{eq:2nd} are false in general if $p>2$ and $\thh\in(0,1]$. 
To this end, suppose, e.g., that $\P(X=1)=\P(X=0)=1/2$. Let 
$\de_{p,\thh}(t):=\De_{p,\thh}(X,X+t)$; cf.\ \eqref{eq:de} and \eqref{eq:De_p,th}. 
Then 
$\de_{p,\thh}(0)=0=\de_{p,\thh}'(0+)$, whereas $\de_{p,\thh}''(0+)=(p-1) \thh^p/(2^p - 2 \thh^p)>0$, whence 
$\De_{p,\thh}(X,X+c)=\de_{p,\thh}(c)>0$ for small enough $c>0$. Take any such $c$ and let $Y:=X+c$, so that $\De_{p,\thh}(X,Y)>0$, that is, inequality \eqref{eq:2nd} is false. So, by \eqref{eq:g'}, $g'(0)>0$, which implies $g(t)>0$ for all small enough $t>0$. Thus, \eqref{eq:1st} with $tY$ in place of $Y$ is false if $t>0$ is small enough. \big(One might note that here $\de_{p,\thh}''(0+)=-\infty<0$ if $1<p<2$ and $\de_{2,\thh}''(0)=-(1-\thh^2)/(2-\thh^2)\le0$.\big)
%
%

The entire proof of Theorem~\ref{th:1} is now complete. 

\begin{remark}
The simple deduction of \eqref{eq:1st} from \eqref{eq:2nd} 
in the paragraph containing formulas \eqref{eq:g} and \eqref{eq:g'} is essentially reversible, so that, vice versa, \eqref{eq:2nd} is easy to deduce from \eqref{eq:1st}. 
Indeed, 
take again any $\thh\in[0,1]$. If $\EE_{p,\thh}(X)=0$, then 
$\P(X=a)=1$ for some real constant $a\ge0$; moreover, if, in addition, $\thh<1$, then
necessarily $a=0$. So, inequality \eqref{eq:2nd} is trivial if $\EE_{p,\thh}(X)=0$. Therefore, wlog $\EE_{p,\thh}(X)>0$ and hence $g'(0)$ exists (cf.\ \eqref{eq:g'}), where $g$ is as in \eqref{eq:g}. Moreover, \eqref{eq:1st} with $tY$ in place of $Y$ yields $g(t)\le0$ for $t\ge0$. 
Since $g(0)=0$, we have $g'(0)\le0$. Now \eqref{eq:2nd} follows by the equality in \eqref{eq:g'}. \qed
\end{remark}

Inequality \eqref{eq:1} was conjectured in \cite{mink}. 

\bibliographystyle{abbrv}

\bibliography{P:/pCloudSync/mtu_pCloud_02-02-17/bib_files/citations12.13.12}





%



\end{document}